\begin{document}

\def\max{\mathop{\rm max}}
\def\cf{\mathop{\rm cf}}
\def\inc{\mathop{\rm inc}\nolimits}
\def\sup{\mathop{\rm sup}\nolimits}
\def\inf{\mathop{\rm inf}\nolimits}
\def\colim{\mathop{\rm colim}}
\def\ker{\mathop{\rm ker}\nolimits}
\def\precdot{\mathop{\prec\!\!\!\cdot}\nolimits}
\def\Fun{\mathop{\rm Fun}}
\def\Map{\mathop{\rm Map}}
\def\Maps{\mathop{\rm Maps}}
\def\Hom{\mathop{\rm Hom}}
\def\Aut{\mathop{\rm Aut}}
\def\support{\mathop{\textsf{support}}}
\def\norm{\mathop{\rm norm}}
\def\CTop{\mathop{\rm CTop}}

\newtheorem{thm}{Theorem}
\newtheorem{theorem}[thm]{Theorem}
\newtheorem{lemma}[thm]{Lemma}
\newtheorem{corollary}[thm]{Corollary}
\newtheorem{proposition}[thm]{Proposition}
\newtheorem{example}[thm]{Example}

\theoremstyle{remark}
\newtheorem{rem}[thm]{Remark}
\newtheorem{definition}[thm]{Definition}

\makeatletter
\let\c@equation\c@thm
\makeatother

\newcommand{\comment}[1]{}
\newcommand{\case}[1]{{\bf \par \noindent Case #1:}}
\newcommand{\mylabel}[1]{\label{#1}}


\title{Chains of group localizations}
\author{Adam J. Prze\'zdziecki$^1$}
\address{Warsaw University of Life Sciences - SGGW, Warsaw, Poland}
\email{adamp@mimuw.edu.pl}

\maketitle
\begin{center}
\today
\end{center}

\footnotetext[1]{The author was partially supported by grant
  N N201 387034 of the Polish Ministry of Science and Higher Education.}

\begin{abstract}
  We construct long sequences of localization functors $L_\alpha$ in the
  category of abelian groups such that $L_\alpha\geq L_\beta$ for infinite cardinals $\alpha<\beta$ less than some $\kappa$.
  For sufficiently large free abelian groups $F$ and $\alpha<\beta$ we have
  proper inclusions $L_\alpha F\subsetneq L_\beta F$.

  \vspace{4pt}
  {\noindent\leavevmode\hbox {\it MSC:\ }}
  {\bf 20K40}

\end{abstract}

We reveal deeper categorical consequences of the proof of \cite[Theorem
2.1]{dugas-localizations} than those stated in the original paper. We show that:
\begin{itemize}
  \item[($\circ$)] There exists a sequence of localization functors
  $L_\lambda:{\mathcal Ab}\to{\mathcal Ab}$ in the category of
  abelian groups, indexed by infinite cardinals $\lambda$ less than
  some nonmeasurable cardinal $\kappa$, such that if
      $F$ is a free abelian group of rank at least $\kappa$
      then for $\alpha<\beta$ we have a proper inclusion
      $L_\alpha F\subsetneq L_\beta F$ which is a localization.
      More, we have $L_\alpha\geq L_\beta$ for $\alpha<\beta$ and localizations of the integers $R=L_\lambda\mathbb{Z}$
      do not depend on $\lambda$.
\end{itemize}
Constructions of this kind have been investigated before. Consider the following
sentence:
\begin{itemize}
  \item[($\ast$)] There exists a sequence of localization functors
      $L_\lambda:\mathcal{C}\to\mathcal{C}$ in a category $\mathcal{C}$
      and an object $F$ in $\mathcal{C}$ such that for $\alpha<\beta$
      we have a proper inclusion
      $L_\alpha F\subsetneq L_\beta F$ which is a localization.
\end{itemize}

The statement ($\ast$) holds in the category of graphs for $\lambda$ ranging over
cardinals less than any $\kappa$ since the ordered set $[0,\kappa)$, considered as
a category, fully embeds into the category of graphs. The validity of ($\ast$) for
$\lambda$ ranging over all cardinals is equivalent to the negation of Vop\v enka's
principle -- see \cite[Lemma 6.3]{adamek-rosicky}. In
\cite{przezdziecki} one constructs a functor from the category of graphs to the
category of groups which preserves orthogonality between morphisms and objects
(see definitions below) -- this implies that our remarks on ($\ast$) hold in the
category of groups. Existence of an analogous functor into the category of abelian
groups (which is conjectured in \cite{przezdziecki}) would translate the above to
the category of abelian groups.

We work in the category of abelian groups ${\mathcal Ab}$, although many
definitions and properties hold in more general categories (see
\cite{casacuberta-survey}). {\em Localization} is a
functor $L:{\mathcal Ab}\to{\mathcal Ab}$ with a natural transformation
$a:Id\to{\mathcal Ab}$ such that for every $X\in{\mathcal Ab}$ we have
$a_{LX}=La_X$ and $a_{LX}:LX\to LLX$ is an isomorphism. If $a_X$ is an
isomorphism then $X$ is
called {\em $L$-local}; if $Lf$ is an isomorphism then $f$ is called an
{\em $L$-equivalence}.

A homomorphism $f:X\to Y$ is {\em orthogonal} to $B$ (we write $f\perp B$) if $f$
induces, via composition, a bijection $f^*:\Hom(Y,B)\to\Hom(X,B)$. If $f:X\to Y$ is
an $L$-equivalence and $B$ is $L$-local then $f\perp B$. Conversely, if $f\perp B$
for all $L$-local $B$ then $f$ is an $L$-equivalence, and if $f\perp B$ for all
$L$-equivalences $f$ then $B$ is $L$-local. This implies that the class of
$L$-local groups is closed under limits and retracts, and the class of
$L$-equivalences is closed under colimits -- see \cite[Proposition
1.3]{casacuberta-survey}.

For any homomorphism $f:A\to B$ there exists a localization $L_f$, called an {\em
$f$-localization}, such that the class of $L_f$-local groups is
$\mathcal{D}=f^\perp=\{D\mid f\perp D\}$, and (it follows that) the class of
$L_f$-equivalences is $\mathcal{E}=\mathcal{D}^\perp=\{g:X\to Y\mid g\perp D
\mbox{ for every } D\in\mathcal{D}\}$. If $f\perp B$ then $a_A=f$ and $B=L_fA$, and
it is customary to call such a homomorphism $f$ a {\em localization}.

For any group $B$ there exists a localization functor $L_B$, called a {\em
localization at $B$}, such that the class of $L_B$-equivalences is
$\mathcal{E}=B^\perp=\{g:X\to Y\mid g\perp B\}$ and the class of $L_B$-local groups
is $\mathcal{D}=\mathcal{E}^\perp$. The existence of $f$-localizations and
localizations at a group is proved in \cite[Theorem 1]{casacuberta-large-cardinal}.

The class of localizations admits a partial ordering. We say that $L_1\geq L_2$
if one of the following, equivalent conditions holds:
\begin{enumerate}
  \item $L_2$ factors (uniquely) through $L_1$.
  \item $L_2=L_2L_1$.
  \item The class of $L_1$-local groups contains the class of $L_2$-local groups.
  \item The class of $L_2$-equivalences contains the class of $L_1$-equivalences.
\end{enumerate}
An $f$-localization is the largest localization
among those $L$ for which $f$ is an $L$-equivalence, while localization at $B$ is
the least one among those $L$ for which $B$ is $L$-local.

If $\kappa\geq\lambda$ are infinite cardinals then by $D^\kappa_{<\lambda}$ we denote the
subgroup of $\prod_\kappa D$ consisting of those functions whose support is less
than $\lambda$.

\begin{lemma}
\mylabel{lemma-support-local}
  Fix an infinite cardinal $\lambda$. If $D^\kappa_{<\lambda}$ is $L$-local for some
  $\kappa\geq\lambda$ then $D^\alpha_{<\lambda}$ is $L$-local for
  all $\alpha\geq\lambda$.
\end{lemma}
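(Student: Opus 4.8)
The plan is to exhibit $D^\alpha_{<\lambda}$ as an inverse limit of groups of the form $D^S_{<\lambda}$ with $|S|\le\kappa$, and then to invoke the two closure properties recorded above: the class of $L$-local groups is closed under limits and under retracts. Fix an index set $A$ with $|A|=\alpha$, and for a subset $S\subseteq A$ write $D^S_{<\lambda}$ for the subgroup of $\prod_S D$ of functions whose support has size $<\lambda$; this is just a copy of $D^{|S|}_{<\lambda}$.

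First I would show that $D^S_{<\lambda}$ is $L$-local whenever $|S|\le\kappa$. Choosing an embedding $S\hookrightarrow K$ into a set $K$ of size $\kappa$, the extension-by-zero map $D^S_{<\lambda}\to D^\kappa_{<\lambda}$ and the restriction map $D^\kappa_{<\lambda}\to D^S_{<\lambda}$ are mutually a section and a retraction, since both manifestly preserve the support bound. Thus $D^S_{<\lambda}$ is a retract of the $L$-local group $D^\kappa_{<\lambda}$, hence $L$-local. In particular this already settles the case $\alpha\le\kappa$, where $A$ itself is an admissible $S$.

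Next, let $\mathcal S=\{S\subseteq A:|S|\le\kappa\}$, ordered by inclusion and equipped with the restriction homomorphisms $\rho_{S,T}\colon D^T_{<\lambda}\to D^S_{<\lambda}$ for $S\subseteq T$. Since $\kappa$ is infinite we have $|S_1\cup S_2|\le\kappa$, so $\mathcal S$ is directed and this is a directed inverse system of $L$-local groups. I would then prove that the canonical map $\Phi\colon D^\alpha_{<\lambda}\to\varprojlim_{S\in\mathcal S}D^S_{<\lambda}$, $f\mapsto(f|_S)_S$, is an isomorphism. Injectivity is immediate by evaluating on singleton components $S=\{a\}$. For surjectivity, a coherent family $(d_S)_S$ determines a single function $f\in\prod_A D$ by $f(a)=d_{\{a\}}(a)$, and coherence forces $f|_S=d_S$ for every $S\in\mathcal S$.

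The hard part — and the only place the hypothesis $\lambda\le\kappa$ is used — is to check that this glued $f$ really has support $<\lambda$, i.e.\ that it lies in $D^\alpha_{<\lambda}$. If instead $|\support(f)|\ge\lambda$, I would choose $S_0\subseteq\support(f)$ with $|S_0|=\lambda$; because $\lambda\le\kappa$ we have $S_0\in\mathcal S$, yet $d_{S_0}=f|_{S_0}$ then has support exactly $S_0$, of size $\lambda$, contradicting $d_{S_0}\in D^{S_0}_{<\lambda}$. Hence $\Phi$ is onto, so $D^\alpha_{<\lambda}\cong\varprojlim_{S\in\mathcal S}D^S_{<\lambda}$ is a limit of $L$-local groups and is therefore $L$-local. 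I expect the surjectivity/gluing step to be the main obstacle; the retract construction and the directedness of $\mathcal S$ are routine cardinal bookkeeping.
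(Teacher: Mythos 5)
Your proof is correct and follows essentially the same route as the paper: realize each $D^S_{<\lambda}$ with $S$ small as a retract of $D^\kappa_{<\lambda}$, then exhibit $D^\alpha_{<\lambda}$ as the inverse limit of these over a directed family of subsets, with the key verification that a glued coherent family still has support of size $<\lambda$. The only cosmetic difference is that you index over subsets of size $\le\kappa$ while the paper uses subsets of size exactly $\lambda$; both yield the same argument.
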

\begin{proof}
  $D^\lambda_{<\lambda}$ is a retract of $D^\kappa_{<\lambda}$, hence it is
  $L$-local.
  Let $\alpha\geq\lambda$. Each $X\subseteq\alpha$ of cardinality $\lambda$ induces a projection
  $\prod_\alpha D\to\prod_X D$. Denoting its image by $D_X$ we obtain
  $D^\alpha_{<\lambda}\to D_X\cong D^\lambda_{<\lambda}$. Then
  $D^\alpha_{<\lambda}=\lim_{{X\subseteq\alpha}\atop{|X|=\lambda}}D_X$
  is $L$-local as a limit of $L$-local groups.
\end{proof}

\begin{corollary}
\mylabel{corollary-direct-sum}
  If $\,\,S=\bigoplus_\kappa D$ is $L$-local for some infinite
  $\kappa$ then it is $L$-local for all $\kappa$.
\end{corollary}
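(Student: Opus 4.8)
The plan is to recognize the direct sum as the $\lambda=\aleph_0$ instance of the bounded-support construction and then simply invoke the preceding lemma. First I would observe that $\bigoplus_\kappa D$ is precisely $D^\kappa_{<\aleph_0}$, since a function $\kappa\to D$ has finite support exactly when its support has cardinality strictly less than $\aleph_0$. Thus the hypothesis reads: $D^\kappa_{<\aleph_0}$ is $L$-local for some infinite $\kappa$, i.e.\ for some $\kappa\geq\aleph_0$, which is exactly the form required to feed into Lemma \ref{lemma-support-local}.

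Applying Lemma \ref{lemma-support-local} with $\lambda=\aleph_0$ then yields at once that $D^\alpha_{<\aleph_0}=\bigoplus_\alpha D$ is $L$-local for every $\alpha\geq\aleph_0$. This disposes of all infinite cardinals in a single step, with no further work.

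It remains only to address finite $\kappa$, should these be intended by ``all $\kappa$''. Here I would note that $D$ is a retract of $\bigoplus_\kappa D$ for any infinite $\kappa$, via the inclusion of, and projection onto, a single coordinate; hence $D$ is $L$-local because the class of $L$-local groups is closed under retracts. For a finite $n$ the group $\bigoplus_n D=\prod_n D$ is a finite product, hence a limit of copies of $D$, and is therefore $L$-local by closure of local groups under limits, the case $n=0$ giving the zero group as the empty limit.

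The whole argument is bookkeeping once the identification $\bigoplus_\kappa D=D^\kappa_{<\aleph_0}$ is made, so there is no serious obstacle. The only point demanding a separate remark — and hence the mild difficulty — is that Lemma \ref{lemma-support-local} is stated only for infinite cardinals, so the finite cases cannot be read off from it and must instead be handled by the independent retract-and-limit argument above.
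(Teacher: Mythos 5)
Your proof is correct and is exactly the argument the paper intends: the corollary is stated without proof precisely because $\bigoplus_\kappa D=D^\kappa_{<\aleph_0}$ and Lemma \ref{lemma-support-local} with $\lambda=\aleph_0$ gives the claim for all infinite $\kappa$. Your extra handling of finite $\kappa$ via retracts and finite products is sound, though the paper's phrasing (``for some infinite $\kappa$ \ldots for all $\kappa$'') is most naturally read as ranging over infinite cardinals only.
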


\begin{lemma}
\mylabel{lemma-s-f-local}
  Let $f:A\to B$ be a homomorphism and $\kappa$ be an infinite regular
  cardinal greater than the number of generators of $A$. If $D$ is $L_f$-local
  then $D^{\kappa}_{<\kappa}$ is $L_f$-local.
\end{lemma}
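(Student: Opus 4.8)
The plan is to verify directly that $f$ is orthogonal to $D^\kappa_{<\kappa}$, i.e.\ that
\[
f^*:\Hom(B,D^\kappa_{<\kappa})\to\Hom(A,D^\kappa_{<\kappa})
\]
is a bijection; since the $L_f$-local groups are exactly $f^\perp$, this is precisely what must be shown. Injectivity is the easy half. If $h:B\to D^\kappa_{<\kappa}$ satisfies $hf=0$, I would compose with each coordinate projection $\pi_i:D^\kappa_{<\kappa}\to D$ to obtain $\pi_i h:B\to D$ with $(\pi_i h)f=0$. As $D$ is $L_f$-local we have $f\perp D$, so $f^*:\Hom(B,D)\to\Hom(A,D)$ is injective and forces $\pi_i h=0$ for every $i\in\kappa$. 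The projections jointly separate points of $D^\kappa_{<\kappa}$, whence $h=0$ and $f^*$ is injective.

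For surjectivity I will exploit the hypotheses on $\kappa$ to replace $D^\kappa_{<\kappa}$ by a genuine product of copies of $D$. Given $g:A\to D^\kappa_{<\kappa}$, fix a generating set $G$ of $A$ with $|G|<\kappa$. Each $g(a)$ with $a\in G$ has $|\support(g(a))|<\kappa$, and since $\kappa$ is regular the union $S=\bigcup_{a\in G}\support(g(a))$ again has size $<\kappa$. Because $G$ generates $A$ and $\support(x+y)\subseteq\support(x)\cup\support(y)$, every $g(x)$ is supported in $S$, so $g$ corestricts to $g':A\to\prod_S D$, where $\prod_S D\hookrightarrow D^\kappa_{<\kappa}$ is the subgroup of functions vanishing off $S$. (Every such function automatically has support of size $\le|S|<\kappa$, so this really is a subgroup of $D^\kappa_{<\kappa}$, and it is the full product of $|S|$ copies of $D$.)

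Now $\prod_S D$ is a product of fewer than $\kappa$ copies of the $L_f$-local group $D$, hence is itself $L_f$-local because the class of $L_f$-local groups is closed under limits. Therefore $f\perp\prod_S D$, and the corestricted map $g'$ lifts to some $\bar g:B\to\prod_S D$ with $\bar g f=g'$; composing with the inclusion $\prod_S D\hookrightarrow D^\kappa_{<\kappa}$ yields $h:B\to D^\kappa_{<\kappa}$ with $hf=g$. This proves surjectivity and completes the bijection.

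The only genuine obstacle is the surjectivity step, and it is exactly there that both hypotheses enter: the bound on the number of generators of $A$ together with the regularity of $\kappa$ guarantee that an arbitrary map out of $A$ has image supported on a set of size $<\kappa$. This is what permits the reduction from the colimit-like group $D^\kappa_{<\kappa}$, which is not evidently local, to an honest product $\prod_S D$, which is local by closure under limits. Dropping either hypothesis would allow the accumulated support to exhaust $\kappa$ and so break the factorization through a small product.
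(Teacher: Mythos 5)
Your proof is correct and follows essentially the same route as the paper: uniqueness/injectivity via the $L_f$-locality of the full product $\prod_\kappa D$ (phrased coordinate-wise in your version), and existence/surjectivity by using the regularity of $\kappa$ and the bound on the generators of $A$ to trap $g(A)$ inside a sub-product $\prod_S D$ with $|S|<\kappa$, which is $L_f$-local by closure under limits. Your write-up is in fact slightly more careful than the paper's in deriving the support bound from a generating set rather than from ``all elements of $g(A)$''.
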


\begin{proof}
  A homomorphism $g:A\to D^{\kappa}_{<\kappa}$ uniquely factors as
  $A\stackrel{f}{\longrightarrow}B\to\prod_\kappa D$, since the product is
  $L_f$-local. The union of the supports of all elements in $g(A)$
  forms a set $X$ whose cardinality is less than $\kappa$; hence $g(A)$
  is contained in a subgroup of $D^{\kappa}_{<\kappa}$ isomorphic to
  $\prod_XD$, hence $L_f$-local, and therefore $g$ uniquely factors through $f$.
\end{proof}

\comment{
A group $H$ is called an {\em E-ring} if it has an element $e\neq 0$ such that the
evaluation on $e$ induces an isomorphism
$\Hom(H,H)\stackrel{\cong}{\longrightarrow} H$. Thus the choice of $e$ uniquely
determines a ring structure on $H$ such that $e$ is the identity. Directly from
the definition we see that $H$ is an E-ring if and only if the homomorphism
$\mathbb{Z}\to H$ which sends $1$ to $e$ is a localization.

\begin{lemma}[{\cite[Lemma 2.3]{dugas-localizations}}]
\mylabel{lemma-equal-cardinals}
  Let $\kappa,\lambda>0$ be cardinals and $H$ an E-ring. If $F_\kappa$ is an
  abelian group of rank $\kappa$ and $f:F_\kappa\to\bigoplus_\lambda H$ is a
  localization then $\kappa=\lambda$.
\end{lemma}
\begin{proof}
  If $\kappa<\lambda$ then the image of $F_\kappa$ is contained in a proper
  summand of $\bigoplus_\lambda H$ and therefore $f$ can not be a localization
  since $\bigoplus_\lambda H$ has a nonidentity endomorphism which fixes the image of $F_\kappa$.

  If $\kappa>\lambda$ then
  $\bigoplus_\kappa F_\kappa\to\bigoplus_\kappa\bigoplus_\lambda H$
  is an $L_f$-equivalence as a colimit of $L_f$-equivalences, its target is
  $L_f$-local by Corollary \ref{corollary-direct-sum}, and therefore it is a localization.
  Uniqueness of localizations implies $\bigoplus_\lambda H\cong\bigoplus_\kappa H$.

  Let $e_0$ be a member of a basis of $F_\kappa$. We obtain a diagram
  \vspace{6pt}
  $$
  \xymatrix{
    F_\kappa \ar[d]_f \ar[r]^\pi
      & {\langle e_0\rangle} \ar[d]^i \ar@{-->}@/_1.3pc/[l] \\
    {\bigoplus_\lambda H} \ar[r]^g
      & H \ar@{-->}@/_1.3pc/[l]_r
  }
  $$
  where $\pi$ is the projection, $i(e_0)=e$, the unique map $g$ exists since
  $H$ is $L_f$ local, as a retract of $\bigoplus_\lambda H$, and $f$ is an
  $L_f$-equivalence. Since $H$ is an E-ring there exists a unique homomorphism
  $r$ such that $r(e)=f(e_0)$. Thus the homomorphism $i$ is a retract of
  $f$ hence it is an $L_f$-localization.

  Then the composition
  $F_\lambda\stackrel{\bigoplus_\lambda i}{\longrightarrow}\bigoplus_\lambda
  H\cong \bigoplus_\kappa H$ is a localization, which was excluded at
  the beginning of this proof.
\end{proof}
}

Let $L$ be a localization. We look at the composition
$$F_\kappa=\bigoplus_\kappa\mathbb{Z}\stackrel{\bigoplus_\kappa
a_\mathbb{Z}}{\longrightarrow}\bigoplus_\kappa L\mathbb{Z}
\subseteq\prod_\kappa L\mathbb{Z}.$$
Since the product is $L$-local, it factors as
\begin{equation}
\mylabel{equation-g}
F_\kappa\stackrel{a}{\longrightarrow} LF_\kappa\stackrel{g}{\longrightarrow}\prod_\kappa L\mathbb{Z}
\end{equation}
where $a=a_{F_\kappa}$.

\rem\mylabel{remark-local-image} Let $N^\kappa_L$ denote the image of $g$.
Since $F_\kappa$ is a free group, it is easy to
see that $N^\kappa_L$ is $L_a$-local.
In fact, $N^\kappa_L$ may be described as the least $L_a$-local subgroup of
$\prod_\kappa L\mathbb{Z}$ which contains $\bigoplus_\kappa\mathbb{Z}$.

\definition\mylabel{definition-support-k}
Define $\support_\kappa L$ as the least cardinal greater than the cardinalities of
the supports of all elements in $N^\kappa_L$.

\rem\mylabel{remark-support-independent-on-k} The number $\support_\kappa L$ does
not depend on the choice of basis for $F_\kappa$: if $B$ and $C$ are two such
bases then a bijection $\alpha:B\to C$ induces a diagram
$$
\xymatrix{
  {\bigoplus_{b\in B}\mathbb{Z}} \ar[r] \ar[d]
    & LF_\kappa \ar[r]^g \ar[d]
    & {\prod_{b\in B}L\mathbb{Z}} \ar[d] \\
  {\bigoplus_{c\in C}\mathbb{Z}} \ar[r]
    & LF_\kappa \ar[r]^{g'}
    & {\prod_{c\in C}L\mathbb{Z}}
}
$$
where the rightmost vertical arrow permutes the components preserving supports of
elements.

\definition\mylabel{definition-support-l}
Define $\support L$ to be the supremum of $\support_\kappa L$ over all cardinals
$\kappa$, or $\infty$ if this class of cardinals is unbounded.

An embedding of a subset $X\subseteq\kappa$ induces a diagram
$$
\xymatrix{
  {\bigoplus_\kappa\mathbb{Z}} \ar[r] \ar[d]
    & LF_\kappa \ar[r]^g \ar[d]
    & {\prod_{\kappa}L\mathbb{Z}} \ar[d] \\
  {\bigoplus_{X}\mathbb{Z}} \ar[r]
    & LF_X \ar[r]^{g'}
    & {\prod_{X}L\mathbb{Z}}
}
$$
where the vertical arrows are retractions. This allows comparing possible
cardinalities of supports of elements of
$N^\kappa_L\subseteq\prod_\kappa L\mathbb{Z}$ for different
$\kappa$'s, and therefore it proves:
\begin{lemma}
\mylabel{lemma-support}
  If $\support_\kappa L\leq\kappa$ then $\support L=\support_\kappa L$.
\end{lemma}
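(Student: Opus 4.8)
The plan is to show that the supremum defining $\support L$ is attained at $\mu=\kappa$, i.e.\ that $\support_\mu L\leq\support_\kappa L$ for every cardinal $\mu$; the reverse inequality $\support L\geq\support_\kappa L$ is automatic since $\support_\kappa L$ is one of the terms of that supremum. The engine of the argument is the displayed retraction diagram, read as follows. For an index set $J$ and a subset $X\subseteq J$ the right-hand square commutes, and the middle vertical arrow $LF_J\to LF_X$ is a retraction induced by the summand inclusion $F_X\subseteq F_J$, hence surjective; chasing the square gives $p(N^J_L)=N^X_L$, where $p:\prod_J L\mathbb{Z}\to\prod_X L\mathbb{Z}$ is the coordinate projection. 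By Remark \ref{remark-support-independent-on-k} the group $N^X_L$ has the same support cardinalities as $N^{|X|}_L$, and since $p$ carries an element of support $S$ to one of support $S\cap X$, passing to a coordinate subset can only shrink supports.

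For the easy direction $\mu\leq\kappa$ I would choose $X\subseteq\kappa$ with $|X|=\mu$ and apply the above with $J=\kappa$: the projection sends $N^\kappa_L$ onto $N^X_L$, whose support cardinalities coincide with those of $N^\mu_L$ and cannot exceed those of $N^\kappa_L$. Hence $\support_\mu L\leq\support_\kappa L$, and this step makes no use of the hypothesis.

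The substantive case is $\mu>\kappa$, where the hypothesis $\support_\kappa L\leq\kappa$ is used. Fix $n\in N^\mu_L$ with support $S\subseteq\mu$. Applying the diagram with $J=\mu$ and any $X\subseteq\mu$ of size $\kappa$, the projection $p(n)$ lies in $N^X_L$, whose support cardinalities are those of $N^\kappa_L$, so $|S\cap X|<\support_\kappa L\leq\kappa$. I would first deduce $|S|<\kappa$: if $|S|\geq\kappa$ one could pick $X\subseteq S$ with $|X|=\kappa$, forcing $S\cap X=X$ of cardinality exactly $\kappa$, against $|S\cap X|<\kappa$. Knowing $|S|<\kappa$, and using $\mu\geq\kappa$, I can enlarge $S$ to a set $X\subseteq\mu$ with $S\subseteq X$ and $|X|=\kappa$, so that $S\cap X=S$ and therefore $|S|=|S\cap X|<\support_\kappa L$. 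Thus every element of $N^\mu_L$ has support of cardinality strictly below $\support_\kappa L$, which yields $\support_\mu L\leq\support_\kappa L$ and finishes the proof.

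The main obstacle is exactly this last case: when $\mu>\kappa$ one cannot view $N^\mu_L$ inside a single copy indexed by $\kappa$, so the support of an element must be controlled through all its $\kappa$-sized coordinate windows at once. The hypothesis $\support_\kappa L\leq\kappa$ is indispensable here, as it is what makes each intersection $S\cap X$ too small to fill a set of size $\kappa$; this is what first pins $|S|$ strictly below $\kappa$ and then lets a single window $X\supseteq S$ certify the sharper bound $|S|<\support_\kappa L$.
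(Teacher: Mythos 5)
Your proof is correct and takes essentially the same approach as the paper, which merely points at the retraction diagram and asserts that it ``allows comparing possible cardinalities of supports \dots for different $\kappa$'s''; you supply exactly the missing details (the identity $p(N^J_L)=N^X_L$ from surjectivity of $LF_J\to LF_X$, and the two-step use of $\kappa$-sized windows $X\subseteq\mu$ to first force $|S|<\kappa$ and then bound $|S|<\support_\kappa L$).
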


\begin{lemma}
\mylabel{lemma-standard-localizations}
  Let $L$ be a localization. The following are equivalent:
  \begin{enumerate}
    \item $\support L=\omega_0$.
    \item $LF_{\omega_0}=\bigoplus_{\omega_0}L\mathbb{Z}$.
    \item For any $\kappa$ we have $LF_\kappa=\bigoplus_\kappa L\mathbb{Z}$.
  \end{enumerate}
\end{lemma}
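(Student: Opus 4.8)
The plan is to prove the cyclic chain $(1)\Rightarrow(3)\Rightarrow(2)\Rightarrow(1)$. The organizing observation, used throughout, is that the canonical map $c=\bigoplus_\kappa a_{\mathbb Z}\colon F_\kappa\to\bigoplus_\kappa L\mathbb Z$ is an $L$-equivalence: each $a_{\mathbb Z}$ is an $L$-equivalence and the class of $L$-equivalences is closed under colimits, so their coproduct $c$ is one as well. Consequently, for a fixed $\kappa$, the equality $LF_\kappa=\bigoplus_\kappa L\mathbb Z$ holds if and only if $\bigoplus_\kappa L\mathbb Z$ is $L$-local: if it is $L$-local then $c$ is an $L$-equivalence into an $L$-local group, hence the localization map, and conversely $LF_\kappa$ is always $L$-local. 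This turns all three conditions into assertions about $L$-locality of $\bigoplus_\kappa L\mathbb Z$ and about the supports of elements of $N^\kappa_L$.

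Two of the three implications are then short. For $(3)\Rightarrow(2)$ one specializes to $\kappa=\omega_0$. For $(2)\Rightarrow(1)$, assume $LF_{\omega_0}=\bigoplus_{\omega_0}L\mathbb Z$; then the localization map $a$ equals $c$, and since $g$ is the unique map with $ga=\iota c$ (where $\iota\colon\bigoplus_{\omega_0}L\mathbb Z\hookrightarrow\prod_{\omega_0}L\mathbb Z$ is the inclusion and $\prod_{\omega_0}L\mathbb Z$ is $L$-local), we get $g=\iota$. Hence $N^{\omega_0}_L=\bigoplus_{\omega_0}L\mathbb Z$, every element of which has finite support, so $\support_{\omega_0}L=\omega_0\le\omega_0$, and Lemma \ref{lemma-support} yields $\support L=\omega_0$.

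The substantial direction is $(1)\Rightarrow(3)$. Its engine is the claim, valid for every $\kappa$ and every $L$, that $\bigoplus_\kappa L\mathbb Z\subseteq N^\kappa_L$. To prove it I would apply naturality of $a\colon\mathrm{Id}\to L$ to the inclusion $\iota$: since $\prod_\kappa L\mathbb Z$ is $L$-local its localization map is an isomorphism, and the naturality square reads $L\iota\circ a_{\bigoplus}=\iota$, where $a_{\bigoplus}$ denotes the localization map of $\bigoplus_\kappa L\mathbb Z$. A short diagram chase, comparing maps out of $LF_\kappa$ after precomposition with the $L$-equivalence $a$ (which determines them uniquely, the target $\prod_\kappa L\mathbb Z$ being $L$-local), identifies $g$ with $L\iota$ up to the isomorphism $Lc$; in particular $N^\kappa_L=\operatorname{im}(g)=\operatorname{im}(L\iota)\supseteq\operatorname{im}(\iota)=\bigoplus_\kappa L\mathbb Z$. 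Now assume $(1)$. Then $\support_\kappa L\le\support L=\omega_0$, so every element of $N^\kappa_L$ has finite support, i.e. $N^\kappa_L\subseteq\bigoplus_\kappa L\mathbb Z$; combined with the claim this forces $N^\kappa_L=\operatorname{im}(\iota)$. Since $\iota$ is injective, $\operatorname{im}(L\iota)=\operatorname{im}(\iota)$, and $L\iota\circ a_{\bigoplus}=\iota$, the map $r=\iota^{-1}\circ L\iota\colon L(\bigoplus_\kappa L\mathbb Z)\to\bigoplus_\kappa L\mathbb Z$ is well defined and satisfies $r\circ a_{\bigoplus}=\mathrm{id}$. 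Thus $\bigoplus_\kappa L\mathbb Z$ is a retract of the $L$-local group $L(\bigoplus_\kappa L\mathbb Z)$, hence $L$-local; by the organizing observation $LF_\kappa=\bigoplus_\kappa L\mathbb Z$, and as $\kappa$ was arbitrary this gives $(3)$.

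I expect the main obstacle to be the inclusion $\bigoplus_\kappa L\mathbb Z\subseteq N^\kappa_L$ together with the attendant identification of $g$ with $L\iota$: this is the one genuinely non-formal input, resting on applying naturality of the localization to $\iota$ and on uniqueness of factorizations through the $L$-equivalence $a$ into the $L$-local product. Once this inclusion is available, the support hypothesis collapses $N^\kappa_L$ onto $\bigoplus_\kappa L\mathbb Z$, and the retraction $r$ delivering $L$-locality is purely formal, as is the reduction via $c$. The remaining bookkeeping---verifying that $r$ is well defined from the equality of images, and that closure of the $L$-local groups under retracts applies---should be routine.
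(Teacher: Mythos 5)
Your proof is correct and follows essentially the same route as the paper: the crux in both is that hypothesis (1) forces $N^\kappa_L=\bigoplus_\kappa L\mathbb{Z}$, after which a retraction built from the $L$-equivalence $\bigoplus_\kappa a_{\mathbb{Z}}$ and the $L$-locality of $LF_\kappa\cong L(\bigoplus_\kappa L\mathbb{Z})$ shows that $\bigoplus_\kappa L\mathbb{Z}$ is $L$-local and coincides with $LF_\kappa$. The only differences are cosmetic: you make explicit the inclusion $\bigoplus_\kappa L\mathbb{Z}\subseteq N^\kappa_L$ (which the paper asserts as $N^\kappa_L\cong\bigoplus_\kappa L\mathbb{Z}$ without comment), and you route $(2)\Rightarrow(1)$ directly instead of deducing $(2)\Rightarrow(3)$ from Corollary~\ref{corollary-direct-sum}.
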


\begin{proof}
  (3)$\implies$(1) and (3)$\implies$(2) are obvious; (2)$\implies$(3)
  follows from Corollary \ref{corollary-direct-sum}.
  It remains to prove (1)$\implies$(3). If $\support L=\omega_0$
  then we have an epimorphism
  $g:LF_\kappa\to N^\kappa_L\cong\bigoplus_\kappa L\mathbb{Z}$.
  Since $LF_\kappa$ is $L$-local and the target of $g$ is
  $L$-equivalent to the free group $F_\kappa$ via an $L$-equivalence
  $\bigoplus_\kappa(\mathbb{Z}\to L\mathbb{Z})$ we see that
  $g$ has a right inverse $r$. Then $r(N^\kappa_L)$ is a retract of
  $LF_\kappa$ which contains $F_\kappa$, thus $r$ is onto and $g$
  is an isomorphism as claimed.
\end{proof}

A localization satisfying the conditions of Lemma
\ref{lemma-standard-localizations} is called in
\cite{dugas-localizations} a {\em standard localization}.

\begin{lemma}
\mylabel{lemma-large-kappa}
  Let $\kappa$ be an infinite cardinal less than the first measurable cardinal.
  Then there exists a localization $L$ such that $\support L>\kappa$.
\end{lemma}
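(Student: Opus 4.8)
The plan is to produce a single localization $L$ together with one cardinal $\mu$ for which the image $N^\mu_L$ already contains an element of support of size $\geq\kappa$, and then to read the conclusion off the definitions. \emph{Reduction.} By Definition \ref{definition-support-l}, $\support L$ is the supremum of the numbers $\support_\mu L$, so it suffices to find $L$ and $\mu$ with $\support_\mu L>\kappa$. By Definition \ref{definition-support-k} this holds as soon as $N^\mu_L\subseteq\prod_\mu L\mathbb{Z}$ contains a single element whose support has cardinality $\geq\kappa$, for then $\support_\mu L\geq\kappa^+$. Using Remark \ref{remark-local-image}, $N^\mu_L$ is the least $L_a$-local subgroup of $\prod_\mu L\mathbb{Z}$ containing $\bigoplus_\mu\mathbb{Z}$, so the real task is to arrange $L$ so that this $L_a$-closure of the direct sum cannot be squeezed into the subgroup of elements of support $<\kappa$.

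\emph{Construction.} Fix a regular cardinal $\lambda$ with $\kappa<\lambda$ and take $L$ to be an $f$-localization with $f:\mathbb{Z}\to R=L\mathbb{Z}$, chosen so that $R$ plays the role of a completion of $\mathbb{Z}$ at the cardinal level $\kappa$. Lemma \ref{lemma-s-f-local}, applied to the one-generator group $\mathbb{Z}$ and the regular cardinal $\lambda$, shows that every bounded-support power $R^\alpha_{<\lambda}$ is $L$-local; since $N^\mu_L$ lies in such a group, this yields the complementary bound $\support L\leq\lambda$ and confirms that we are searching in the right range. The aim of the construction is to force, for $\mu=\kappa$, the localization map $a\colon F_\kappa\to LF_\kappa$ to hit an element $w\in\prod_\kappa R$ whose support is all of $\kappa$. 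The motivating picture is the element $(p,p^2,p^3,\dots)$, which lies in the ordinary $p$-adic closure of $\bigoplus_{\omega_0}\mathbb{Z}$ and already witnesses $\support>\omega_0$; here the analogous phenomenon must be produced one cardinal level higher, and a localization realizing it is precisely what the proof of \cite[Theorem 2.1]{dugas-localizations} supplies.

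\emph{The crux.} The heart of the matter, and the only place where the hypothesis that $\kappa$ lies below the first measurable cardinal enters, is to check that $w$ is not collapsed, i.e. that no $L_a$-local subgroup of $\prod_\kappa R$ containing $\bigoplus_\kappa\mathbb{Z}$ omits $w$. This reduces to controlling homomorphisms out of products: by the slenderness of $\mathbb{Z}$ together with the {\L}o\'s--Eda theorem, for $\kappa$ below the first measurable cardinal every homomorphism $\prod_\kappa\mathbb{Z}\to\mathbb{Z}$ is supported on a finite set, so every obstruction to reaching $w$ is itself of small support and is annihilated by the orthogonality that defines $L$. Beyond a measurable cardinal a $\kappa$-complete nonprincipal ultrafilter would supply a homomorphism detecting $w$, allowing a proper small-support subgroup to be $L_a$-local and breaking the argument; this is exactly why the construction halts at the first measurable cardinal. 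Establishing this non-collapse is the step I expect to be the main obstacle.

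\emph{Conclusion.} Once $w\in N^\kappa_L$ with $|\support w|=\kappa$ is in hand, Definition \ref{definition-support-k} gives $\support_\kappa L\geq\kappa^+$, and hence $\support L\geq\support_\kappa L>\kappa$, as required.
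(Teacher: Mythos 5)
Your reduction is correct and your concluding paragraph would indeed finish the proof, but the construction in the middle cannot work as stated, and the paper's own lemmas show why. If you take $L=L_f$ for a homomorphism $f:\mathbb{Z}\to R$ whose domain has one generator, then Lemma \ref{lemma-s-f-local} applies not only with your regular cardinal $\lambda>\kappa$ but already with the regular cardinal $\omega_0>1$: it gives that $R^{\omega_0}_{<\omega_0}=\bigoplus_{\omega_0}R$ is $L_f$-local, whence by Corollary \ref{corollary-direct-sum} the direct sum $\bigoplus_\mu R$ is $L_f$-local for every infinite $\mu$. Since $\bigoplus_\mu f:\bigoplus_\mu\mathbb{Z}\to\bigoplus_\mu R$ is an $L_f$-equivalence (a colimit of copies of $f$) with $L_f$-local target, it is the $L_f$-localization of $F_\mu$; hence $N^\mu_{L_f}=\bigoplus_\mu R$ consists of finite-support elements and $\support L_f=\omega_0$, no matter how cleverly $R$ is chosen. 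So there is no full-support element $w$ to be found, and the ``crux'' you isolate --- which you in any case leave unproved; the slenderness/{\L}o\'s--Eda remark controls homomorphisms $\prod_\kappa\mathbb{Z}\to\mathbb{Z}$ but does not show that every $L_a$-local subgroup of $\prod_\kappa R$ containing $\bigoplus_\kappa\mathbb{Z}$ must contain $w$ --- cannot be repaired within your framework.

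The paper avoids this trap by localizing the free group of rank $\kappa$ itself rather than $\mathbb{Z}$: it invokes the construction at the heart of the proof of \cite[Theorem 2.1]{dugas-localizations} of a localization homomorphism $\varepsilon:F_\kappa\to M$ with $\bigoplus_\kappa R\subseteq M\subseteq\prod_\kappa R$, $R=L_\varepsilon\mathbb{Z}$, and --- crucially --- $M$ containing nowhere-zero functions. With $L=L_\varepsilon$, Lemma \ref{lemma-s-f-local} only bites for regular cardinals above $\kappa$ (the domain now has $\kappa$ generators), so it bounds $\support L$ above by $\kappa^+$ without collapsing it to $\omega_0$, while the nowhere-zero elements of $M=LF_\kappa\subseteq\prod_\kappa R$ land in $N^\kappa_L$ and give $\support_\kappa L\geq\kappa^+>\kappa$, exactly as in your concluding step. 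The hard content --- building $M$ and $R$ with the required endomorphism control below the first measurable cardinal --- is outsourced entirely to Dugas; your proposal correctly senses that this is where the difficulty lies, but it neither supplies that argument nor cites a statement strong enough to replace it, and the stand-in construction it offers provably fails.
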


\begin{proof}
  At the heart of the proof of \cite[Theorem 2.1]{dugas-localizations}
  lies a construction of a localization homomorphism
  $\varepsilon:F_\kappa\to M$ such that for a certain group $R$ we have
  $\bigoplus_\kappa R\subseteq M\subseteq\prod_\kappa R$ and
  $M$ contains functions which are nowhere zero and
  $R=L_\varepsilon\mathbb{Z}$. This implies our claim.
\end{proof}

\begin{theorem}
\mylabel{theorem-chain}
  Let $\kappa$ be an infinite cardinal less than the first measurable cardinal.
  There exists a sequence of localization functors $L_\alpha$ for $\alpha<\kappa$, such that:
  \begin{itemize}
    \item[(a)] $\support L_\alpha=\alpha^+$,
    \item[(b)] $L_\alpha\geq L_\beta$ for $\alpha<\beta<\kappa$,
    \item[(c)] $L_\alpha F_\kappa\subsetneq L_\beta F_\kappa$ for $\alpha<\beta<\kappa$,
  \end{itemize}
  where $\alpha^+$ is the successor cardinal of $\alpha$.
\end{theorem}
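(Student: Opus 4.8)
\emph{Proof plan.} The plan is to realize all the $L_\alpha$ as $f$-localizations coming from a single Dugas homomorphism, truncated by support. First I would invoke the construction behind Lemma~\ref{lemma-large-kappa}: since $\kappa$ lies below the first measurable cardinal it supplies a localization $\varepsilon\colon F_\kappa\to M$ with $\bigoplus_\kappa R\subseteq M\subseteq\prod_\kappa R$, with $R=L_\varepsilon\mathbb{Z}$, and with $M$ containing nowhere-zero functions. For every infinite cardinal $\alpha<\kappa$ the retractions in the diagram preceding Lemma~\ref{lemma-support} then produce an element of $M$ of support exactly $\alpha$ (restrict a nowhere-zero function to an $\alpha$-subset $X$ and push it back into $\prod_\kappa R$ by extension by zero). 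Put $M_\alpha:=M\cap R^\kappa_{<\alpha^+}$, the elements of $M$ whose support has cardinality at most $\alpha$, let $\varepsilon_\alpha\colon F_\kappa\to M_\alpha$ be the corestriction of $\varepsilon$, and define $L_\alpha:=L_{\varepsilon_\alpha}$. Note that $\bigoplus_\kappa R\subseteq M_\alpha\subseteq M_\beta$ for $\alpha<\beta$ and that $\varepsilon_\beta=\iota_{\alpha\beta}\varepsilon_\alpha$, where $\iota_{\alpha\beta}\colon M_\alpha\hookrightarrow M_\beta$ is the inclusion.

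The first substantial step, underlying (a), is to identify the localization as $L_\alpha F_\kappa=M_\alpha$. As $\varepsilon_\alpha$ is an $L_\alpha$-equivalence by construction, this reduces to showing that $M_\alpha$ is $L_\alpha$-local, i.e. $\varepsilon_\alpha\perp M_\alpha$: every homomorphism of the free group $F_\kappa$ into $M_\alpha$ extends uniquely to an endomorphism of $M_\alpha$. This is exactly the rigidity built into Dugas's $M$ — the property that makes $\varepsilon$ a localization — restricted to the support-$\le\alpha$ part, and the support bookkeeping is what Lemmas~\ref{lemma-support-local} and~\ref{lemma-s-f-local} are designed to control, since any homomorphism out of $F_\kappa$ meets only a support-$\le\alpha$ piece of $M_\alpha$ at a time. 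Granting $L_\alpha F_\kappa=M_\alpha$, Remark~\ref{remark-local-image} identifies $N^\kappa_{L_\alpha}$ with $M_\alpha$, whose supports are $\le\alpha$ with the value $\alpha$ attained; hence $\support_\kappa L_\alpha=\alpha^+$, and since $\alpha^+\le\kappa$ Lemma~\ref{lemma-support} upgrades this to $\support L_\alpha=\alpha^+$.

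For (b) I would use the characterization of an $f$-localization as the \emph{largest} localization for which $f$ is an equivalence: to get $L_\alpha\ge L_\beta$ it suffices to check that $\varepsilon_\alpha$ is an $L_\beta$-equivalence. Since $\varepsilon_\beta$ is an $L_\beta$-equivalence (every map is an equivalence for its own localization) and $L_\beta\varepsilon_\beta=L_\beta\iota_{\alpha\beta}\circ L_\beta\varepsilon_\alpha$, functoriality of $L_\beta$ reduces the claim to showing that $\iota_{\alpha\beta}$ is an $L_\beta$-equivalence, i.e. $L_\beta M_\alpha=M_\beta$. This is the step I expect to be the main obstacle: applying the larger localization $L_\beta$ to the support-truncated group $M_\alpha$ must rebuild all of $M_\beta$. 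I would prove it by a $\Hom$-comparison against $L_\beta$-local $D$. Because $\mathbb{Z}\to R$ is an $L_\beta$-equivalence one has $\Hom(R,D)=D$, whence $\Hom(\bigoplus_\kappa R,D)=D^\kappa=\Hom(F_\kappa,D)$, and since $\bigoplus_\kappa R\to M_\beta$ is an $L_\beta$-equivalence the restriction maps $\Hom(M_\beta,D)\to\Hom(M_\alpha,D)\to\Hom(\bigoplus_\kappa R,D)$ have bijective composite. Thus $\iota_{\alpha\beta}^*$ is injective, and the remaining point — that a homomorphism $M_\alpha\to D$ is already determined by its restriction to $\bigoplus_\kappa R$ — is once more a consequence of Dugas's rigidity together with the locality of support-bounded products from Lemmas~\ref{lemma-support-local} and~\ref{lemma-s-f-local}.

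Finally (c) follows formally. By (b) and condition (2) of the ordering, $L_\beta=L_\beta L_\alpha$, so the natural map $L_\alpha F_\kappa\to L_\beta F_\kappa$ is the localization $a_{L_\alpha F_\kappa}$ for $L_\beta$; under the identifications of the first two steps it is precisely the inclusion $\iota_{\alpha\beta}\colon M_\alpha\hookrightarrow M_\beta$ inside $\prod_\kappa R$ (the structure maps $g$ are here injective, being inclusions of subgroups of $\prod_\kappa R$). It is therefore a localization, and it is proper because $\support L_\beta=\beta^+>\alpha^+$ yields elements of $M_\beta$ of support strictly larger than $\alpha$, which by definition cannot lie in $M_\alpha$. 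This gives $L_\alpha F_\kappa\subsetneq L_\beta F_\kappa$ and completes the argument; along the way the identities $L_\alpha\mathbb{Z}=R$, used above and asserted in ($\circ$), are part of the same rigidity package, $R$ being a retract of the $L_\alpha$-local group $M_\alpha$.
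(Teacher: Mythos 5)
Your construction is genuinely different from the paper's, and it has a gap at exactly the step you flag as "the main obstacle." You set $L_\alpha=L_{\varepsilon_\alpha}$ for $\varepsilon_\alpha\colon F_\kappa\to M_\alpha=M\cap R^\kappa_{<\alpha^+}$, and everything (the identification $L_\alpha F_\kappa=M_\alpha$ needed for (a), and the claim that $\iota_{\alpha\beta}$ is an $L_\beta$-equivalence needed for (b)) rests on showing that $M_\alpha$ is $L_{\varepsilon_\alpha}$-local, i.e.\ $\varepsilon_\alpha\perp M_\alpha$. You defer this to "Dugas's rigidity" together with Lemmas~\ref{lemma-support-local} and~\ref{lemma-s-f-local}, but Lemma~\ref{lemma-s-f-local} does not apply here: its hypothesis is that the regular cardinal (here it would have to be $\alpha^+$) exceeds the number of generators of the \emph{source} of the map, and the source of your $\varepsilon_\alpha$ is $F_\kappa$ with $\kappa\geq\alpha^+$ generators. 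Concretely, a homomorphism $F_\kappa\to R^\kappa_{<\alpha^+}$ need not have image inside any $\prod_X R$ with $|X|\leq\alpha$ (the union of the supports of the images of $\kappa$ generators can be all of $\kappa$), so the support bookkeeping that drives Lemma~\ref{lemma-s-f-local} collapses. Nothing stated in the paper gives you that an arbitrary homomorphism $F_\kappa\to M_\alpha$ extends over $M_\alpha$, nor that the unique extension $M\to M$ of such a map preserves the support bound $\leq\alpha$; both would require opening up Dugas's construction, which the paper deliberately avoids.

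The paper's proof sidesteps this by shrinking the source rather than truncating the target: it sets $L_\alpha=L_{f_\alpha}$ where $f_\alpha\colon F_\alpha\to LF_\alpha$ is the localization of the free group of rank $\alpha$. Now Lemma~\ref{lemma-s-f-local} \emph{does} apply with the regular cardinal $\alpha^+>\alpha$, giving that $R^{\alpha^+}_{<\alpha^+}$ is $L_\alpha$-local, Lemma~\ref{lemma-support-local} propagates this to $R^\kappa_{<\alpha^+}$ for all $\kappa$, and Remark~\ref{remark-local-image} yields $\support L_\alpha\leq\alpha^+$; the lower bound comes from the same retraction argument you use ($LF_\alpha$ is a retract of $M$ and inherits nowhere-zero elements). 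Items (b) and (c) then follow because $f_\alpha$ is a retract of $f_\beta$ for $\alpha<\beta$, so every group orthogonal to $f_\beta$ is orthogonal to $f_\alpha$ — no analysis of an inclusion $M_\alpha\hookrightarrow M_\beta$ is needed. Your peripheral observations (the lower bound on $\support$ via restriction and extension by zero, $L_\alpha\mathbb{Z}=R$ as a retract, and the derivation of (c) from (a) and (b)) are sound, but the central locality claim is not established and cannot be repaired with the tools the paper provides.
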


\begin{proof}
  Let $L$ be the localization from Lemma \ref{lemma-large-kappa} and
  $f_\alpha:F_\alpha\to LF_\alpha$ be the localization homomorphism.
  Define $L_\alpha=L_{f_\alpha}$. Since $L_\alpha F_\alpha=LF_\alpha$
  is a retract of $LF_\kappa$, an argument as in the proof of Lemma
  \ref{lemma-support} implies that $\support L_\alpha>\alpha$.
  Lemma \ref{lemma-s-f-local} for $\kappa=\alpha^+$ and Lemma
  \ref{lemma-support-local} imply that $R^\kappa_{<\alpha^+}$ is
  $L_\alpha$-local for all $\kappa>\alpha$, hence Remark \ref{remark-local-image}
  implies that $\support L_\alpha\leq\alpha^+$, which yields (a). Since for
  $\alpha<\beta$ the map $f_\alpha$ is a retract of $f_\beta$, items (b) and (c) follow easily.
\end{proof}

If $f:\mathbb{Z}\to R=L_\varepsilon\mathbb{Z}$ is an $L_\varepsilon$
localization of $\mathbb{Z}$ as in the proof of Lemma \ref{lemma-large-kappa}
then the $f$-localization $L_f$ is strictly greater, while the localization at $R$,
$L_R$, is strictly less than all the localizations $L_\alpha$. We do not
know if $L=L_R$; it is still conceivable that $\support L_R$ might exceed $\kappa^+$.

In the proof of Lemma \ref{lemma-large-kappa} the groups $R$ and $M=LF_\kappa$
have the same cardinality $\lambda\geq 2^\kappa$, hence also the groups
$L_\alpha F_\kappa$ have cardinality $\lambda$ each. This cannot happen
if we want $\alpha$ to run over all cardinals, as we speculated in the introduction.

In principle, one could construct similar sequences of localizations based on the structure of
the kernels of maps $g$ in Diagram (\ref{equation-g}), but we are unaware
of any examples of nontrivial kernels of $g$. Dugas and Feigelstock prove in
\cite[Theorem 1.8]{dugas-self-free} that in certain cases these kernels must be trivial.

\end{document}